\newtheorem{theorem}{Theorem}[section]
\newtheorem{lemma}{Lemma}[section]
\newtheorem{definition}{Definition}[section]
\newtheorem{proposition}[theorem]{Proposition}
\newtheorem{corollary}[theorem]{Corollary}
\newtheorem{remark}{Remark}[section]
\begin{document}

\begin{center}
{\Large   Time Dependent Tempered Generalized Functions and  It\^o's
Formula  \\}
\end{center}

\vspace{0.3cm}

\begin{center}

{\large Pedro Catuogno\footnote{Research partially supported by CNPQ 302704/2008-6.} .}\\

\textit{Departamento de
 Matem\'{a}tica, Universidade Estadual de Campinas, \\ F. 54(19) 3521-5921 � Fax 55(19)
3521-6094\\ 13.081-970 -
 Campinas - SP, Brasil}
\end{center}

\begin{center}

{\large
Christian Olivera\footnote{Research  supported by CAPES PNPD N02885/09-3.}}\\
\textit{Departamento de Matem\'{a}tica- Universidade Federal de S\~ao Carlos
Rod. Washington Luis, Km 235 - C.P. 676 - 13565-905 S\~ao Carlos, SP - Brasil
\\ Fone: 55(016)3351-8218
\\  e-mail:  colivera@dm.ufscar.br}
\end{center}

\vspace{0.3cm}
\begin{center}
\begin{abstract}
\noindent The paper introduces  a novel  It\^o's formula for time
dependent tempered generalized functions. As an application, we
study the heat equation when initial conditions are allowed to be a
generalized tempered function. A new proof of the Ustunel- It\^o's
formula for tempered distributions is also provided.
\end{abstract}
\end{center}

\noindent {\bf Key words:} Generalized functions,  It\^o's formula,
Stochastic calculus via regularization, Hermite expansions.

\vspace{0.3cm} \noindent {\bf MSC2000 subject classification:}
 46F10, 46F30, 60H99 .

\section {Introduction}
\noindent The study of Stochastic Partial Differential Equations
(SPDE) in algebras of generalized functions could be traced back to
the early 1990s, see \cite{albe},
\cite{albe2}, \cite{ober4} and \cite{Russo}. The interest for singular stochastic processes,
such as the white noise process, and solve differential equations driven by these type of processes are the main reasons for
treating of SPDE in the framework of the algebras of generalized functions (see by example \cite{albe} and \cite{CO2}).

 \noindent The central theme of the present paper is to  develop stochastic calculus  via regularization  in the
 setting of algebras of generalized functions. 
The  main technique is  the construction of an algebra of tempered
generalized functions   via the  regularization scheme induced by expansions in  Hermite functions.
 This approach allows us to obtain an It\^o's formula for   elements in the algebra. In
particular, we deduce  the  Ustunel- It\^o's  formula (see
\cite{ustunel})  for tempered distributions (see also \cite{Kunita}
and \cite{rajeev}). Sections 2 and 3 of the present paper present
the relevant precise definitions and details.

\noindent  As an application of our results, we also show the
existence and uniqueness of the solution to the heat equation
 with the initial condition being a tempered  generalized function; this makes crucial use of the obtained
 It\^o's  formula for tempered generalized functions. See Sections 4 for details.

\newpage
\section{Generalized functions}
\subsection{Tempered distributions}

Let $\mathcal{S}(\mathbb{R}^{d})$ be the Schwartz
space on $\mathbb{R}^{d}$ i.e. the space of rapidly decreasing smooth real valued functions on $\mathbb{R}^{d}$.

We make use of the multi-index notation; a multi-index is a sequence
$\alpha = (\alpha_1,...,\alpha_d ) \in \mathbb{N}_0^d$ where
$\mathbb{N}_0$ is the set of nonnegative integers. The sum $|\alpha|
= \sum_{j=1}^n \alpha_j$ is called the order of $\alpha$. For every
multi-index $\alpha$ we write
\[
x^{\alpha}=x_1^{\alpha_1} \cdots x_d^{\alpha_d}
\]
and
\[
\partial^{\alpha}=\partial^{\alpha_1}_1 \cdots \partial^{\alpha_d}_d
\]
where $\partial_j=\frac{\partial}{\partial x_j}$.

The Schwartz topology on $\mathcal{S}(\mathbb{R}^{d})$ is given
by the family of seminorms
\[
\|f\|_{\alpha,\beta}=(\int_{\mathbb{R}^{d}} | x^{ \alpha }\partial^{\beta}f(x)  |^2 dx)^{\frac{1}{2}}
\]
where $\alpha,\beta \in \mathbb{N}_0^d$.

The Schwartz space $\mathcal{S}^{\prime}(\mathbb{R}^{d})$ of
tempered distributions is the dual space of
$\mathcal{S}(\mathbb{R}^{d})$.

The {\it{Hermite polynomials}} $H_n(x)$ are defined by
\begin{equation}\label{forpolHe}
    H_n(x)=(-1)^n e^{\frac{x^2}{2}} \frac{d^n}{dx^n}  e^{-\frac{x^2}{2}}
\end{equation}

and the {\it{Hermite functions}} $h_n(x)$ are defined by
\begin{equation}\label{relfunpolh}
    h_{n}(x)=  (\sqrt{2\pi}n!)^{-\frac{1}{2}}
    e^{{-\frac{1}{4}x^2}}H_n(x)
\end{equation}
for $n\in \mathbb{N}_{0}$.

The $\alpha$-th. Hermite function on $\mathbb{R}^d$ is given by
\[
h_{\alpha}(x_1,...,x_d)=h_{\alpha_{1}}(x_1) \cdots h_{\alpha_{d}}(x_d)
\]
where $\alpha=(\alpha_1,...,\alpha_d) \in \mathbb{N}_{0}^{d}$.

The Hermite functions are in the Schwartz
space on $\mathbb{R}^d$ and the set $\{ h_{\alpha}: \alpha \in \mathbb{N}_{0}^{d} \}$ is an orthonormal basis for $L^{2}(\mathbb{R}^{d})$.

We consider the directed family of norms $\{ | \cdot |_{n} : n \in \mathbb{N}_{0} \}$ on $\mathcal{S}(\mathbb{R}^d)$, given by

\[
|\varphi|^2_{n}:=\sum_{\beta\in
\mathbb{N}_{0}^d}(2|\beta|+d)^{2n}(\int_{\mathbb{R}^d} \varphi(x)h_{\beta}(x)dx)^2.
\]

We observe that the families of seminorms $  \{
|\cdot |_{n} : n \in \mathbb{N}_{0}\}$ and $ \{\| \cdot \|_{\alpha,\beta}  : \alpha,\beta\in
\mathbb{N}_{0}^{d} \}$  on
$\mathcal{S}(\mathbb{R}^{d})$ are equivalent.

Let $x \in \mathbb{R}^d$, and  denote by $\tau_x$ the translation
operator defined on functions by the formula $\tau_x
\varphi(y)=\varphi(y-x)$. It follows immediately that
$\tau_x(\mathcal{S}(\mathbb{R}^d))\subset \mathcal{S}(\mathbb{R}^d)$
and that $\tau_{-x}$ is the inverse of $\tau_x$, thus, we can
consider $\tau_x$ acting on the tempered distributions by
\[
\tau_x T(\varphi)=T( \tau_{-x}\varphi).
\]
\begin{lemma}\label{lema}
Let $n \in \mathbb{N}_0$, there exists a polynomial $P_{n}$ with
nonnegative coefficients such that for all $x \in \mathbb{R}^d$,
\begin{equation}
|\tau_x \varphi |_{n} \leq P_{n}(|x|) | \varphi |_{n}
\end{equation}
for all $\varphi \in \mathcal{S}(\mathbb{R}^d)$.
\end{lemma}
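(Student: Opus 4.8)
The plan is to recast the graded norm $|\cdot|_n$ as an $L^2$--operator norm for the Hermite (harmonic oscillator) operator, and then to track how a translation conjugates that operator. Let $H=-2\Delta+\frac12|x|^2$ on $\mathbb{R}^d$. From the definitions of $H_n$ and $h_n$ one checks the eigenrelation $Hh_\beta=(2|\beta|+d)\,h_\beta$ for every $\beta\in\mathbb{N}_0^d$. Since $\{h_\beta\}$ is an orthonormal basis of $L^2(\mathbb{R}^d)$ and $H$ is symmetric on $\mathcal{S}(\mathbb{R}^d)$, we have $\langle H^n\varphi,h_\beta\rangle=(2|\beta|+d)^n\langle\varphi,h_\beta\rangle$, and therefore
\[
|\varphi|_n^2=\sum_{\beta}(2|\beta|+d)^{2n}\langle\varphi,h_\beta\rangle^2=\|H^n\varphi\|_{L^2}^2 ,
\]
that is, $|\varphi|_n=\|H^n\varphi\|_{L^2}$. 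This turns the lemma into an $L^2$--estimate for $H^n\tau_x$.

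Next I would conjugate $H$ by $\tau_x$. The operator $\tau_x$ is unitary on $L^2(\mathbb{R}^d)$, the Laplacian commutes with $\tau_x$, and conjugating multiplication by $|y|^2$ turns it into multiplication by $|y+x|^2$; hence
\[
\tau_{-x}\,H\,\tau_x=H+(x\cdot y+\tfrac12|x|^2)=:H+M_x ,
\]
where $M_x$ is multiplication by the degree--one polynomial $x\cdot y+\frac12|x|^2$. Using unitarity of $\tau_x$ once more,
\[
|\tau_x\varphi|_n=\|H^n\tau_x\varphi\|_{L^2}=\|\tau_{-x}H^n\tau_x\,\varphi\|_{L^2}=\|(H+M_x)^n\varphi\|_{L^2}.
\]

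I would then expand $(H+M_x)^n$ as a sum of the $2^n$ words in the letters $H$ and $M_x$, and bring each word to the normal form $\sum c_{\mu\nu}(x)\,y^\mu\partial^\nu$ by repeatedly using $\partial_j y_k=y_k\partial_j+\delta_{jk}$. Each factor $H$ raises the total degree $|\mu|+|\nu|$ by at most $2$ and each factor $M_x$ by at most $1$, while every contraction produced by $\delta_{jk}$ only lowers it; hence all surviving monomials satisfy $|\mu|+|\nu|\le 2n$, and the coefficients $c_{\mu\nu}(x)$ are polynomials in $x$ (powers of $x$ arising only from the $M_x$ factors). Granting the estimate
\[
\|y^\mu\partial^\nu\varphi\|_{L^2}\le C_{n,d}\,\|H^n\varphi\|_{L^2}\qquad\text{whenever } |\mu|+|\nu|\le 2n,
\]
we obtain $|\tau_x\varphi|_n\le C_{n,d}\left(\sum_{\mu,\nu}|c_{\mu\nu}(x)|\right)|\varphi|_n$, and majorizing each $|c_{\mu\nu}(x)|$ by a polynomial in $|x|$ with nonnegative coefficients yields the desired $P_n$ (of degree at most $2n$).

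The crux is the displayed estimate, which is exactly the quantitative, index--matching form of the equivalence of the two families of seminorms recorded above; the bare topological equivalence would only bound $\|y^\mu\partial^\nu\varphi\|$ by some $|\varphi|_m$ with possibly $m>n$, which is too weak here. I would prove it with the ladder operators $A_j=\frac12 y_j+\partial_j$ and $A_j^{*}=\frac12 y_j-\partial_j$, which satisfy $[A_j,A_k^{*}]=\delta_{jk}$, give $y_j=A_j+A_j^{*}$ and $\partial_j=\frac12(A_j-A_j^{*})$, and act on the Hermite basis as weighted index shifts ($A_j$ and $A_j^{*}$ move $\beta_j$ by $\mp 1$ with factors of order $(|\beta|+1)^{1/2}$). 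Writing $y^\mu\partial^\nu$ as a combination of at most $|\mu|+|\nu|\le 2n$ ladder operators, applying it to $\varphi=\sum_\beta\hat\varphi_\beta h_\beta$, and using that each output frequency receives contributions from only boundedly many input frequencies, one gets $\|y^\mu\partial^\nu\varphi\|_{L^2}^2\le C\sum_\beta(|\beta|+1)^{|\mu|+|\nu|}\hat\varphi_\beta^2$; since $(|\beta|+1)^{|\mu|+|\nu|}\le(2|\beta|+d)^{2n}$ for $|\mu|+|\nu|\le 2n$ and $d\ge 1$, the right-hand side is $\le C'|\varphi|_n^2$. This ladder-operator bookkeeping, rather than the routine conjugation and normal-ordering steps, is where the real work lies.
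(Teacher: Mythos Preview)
Your argument is correct, but the paper's proof takes a more direct path by importing the sharp norm equivalence as a black box. The paper cites Proposition~3.3 of Rajeev--Thangavelu to obtain constants with
\[
|\varphi|_n \le C_1(n)\sum_{|\alpha|,|\beta|\le 2n}\|\varphi\|_{\alpha,\beta}\le C_2(n)\,|\varphi|_n,
\]
applies the left inequality to $\tau_x\varphi$, changes variables $y\mapsto y+x$ in each integral so that $y^{2\alpha}$ becomes $(y+x)^{2\alpha}$, expands binomially to peel off a polynomial in $|x|$, and converts back with the right inequality. Your route via $|\cdot|_n=\|H^n\cdot\|_{L^2}$ and the conjugation $\tau_{-x}H\tau_x=H+M_x$ is organized differently but lands on the same crux: one needs $\|y^\mu\partial^\nu\varphi\|_{L^2}\lesssim|\varphi|_n$ whenever $|\mu|+|\nu|\le 2n$, which is precisely (one direction of) the cited equivalence. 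The paper quotes it; you supply a self-contained ladder-operator proof. So your approach buys independence from the external reference at the cost of length, while the paper's is essentially a two-line computation once the Rajeev--Thangavelu estimate is available. Both arguments give $\deg P_n\le 2n$.
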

\begin{proof}
Using Proposition 3.3 from  \cite{rt}, for each $n \in \mathbb{N}_0$
there exist constants $C_1 (n)$ and $C_2 (n)$ such that
\[
|\varphi |_{n}  \leq C_1(n) \sum_{|\alpha|, |\beta| \leq 2n} \| \varphi \|_{\alpha, \beta}
 \leq C_2(n)|\varphi|_n
\]
for all $\varphi \in \mathcal{S}(\mathbb{R}^d)$.

Thus
\begin{eqnarray*}
|\tau_x \varphi |_{n}  & \leq & C_1(n)
\sum_{|\alpha|, |\beta| \leq 2n}(\int_{\mathbb{R}^d}y^{2\alpha}(\partial^{\beta}\tau_x\varphi)^2(y) dy)^{\frac{1}{2}} \\
& = & C_1(n) \sum_{|\alpha|, |\beta| \leq 2n}(\int_{\mathbb{R}^d}(y+x)^{2\alpha}(\partial^{\beta}\varphi)^2(y) dy)^{\frac{1}{2}} \\
& \leq & P_n(|x|)|\varphi|_n,
\end{eqnarray*}
where $P_n$ is a polynomial of degree lower or equal to $2n$.
\end{proof}

Multiplication on  $\mathcal{S}(\mathbb{R}^d)$ has the following
property: for all $n \in \mathbb{N}_0$ there exists $r,s \in
\mathbb{N}_0$ and $C_n \in \mathbb{R}$ such that
\begin{equation}
|\varphi \psi|_n \leq C_n |\varphi|_r |\psi|_s
\end{equation}
for all $\varphi, \psi \in \mathcal{S}(\mathbb{R}^d)$ (see for
instance \cite{RTR} ). We shall make often use of this property.

The Hermite representation theorem for
$\mathcal{S}(\mathbb{R}^{d})$
($\mathcal{S}^{\prime}(\mathbb{R}^{d})$) states a topological
isomorphism beetwen  $\mathcal{S}(\mathbb{R}^{d})$
($\mathcal{S}^{\prime}(\mathbb{R}^{d})$) and the space of sequences
$\mathbf{s}_{d}$ ($\mathbf{s}_{d}^{\prime}$).

Let $\mathbf{s}_{d}$ be the space of sequences
\[
\mathbf{s}_{d}=\{(a_\beta)\in
\ell^2(\mathbb{N}^{d})):\sum_{\beta \in \mathbb{N}_{0}^{d}}(2|\beta|+d)^{2n}\mid
a_{\beta}\mid^2<\infty, \; \mbox{for all } \; n \in
\mathbb{N}_{0}\}.
\]

The space $\mathbf{s}_d$ is a locally convex space with the
family of norms
\[
| (a_\beta)|_n = (\sum_{\beta \in \mathbb{N}_{0}^{d}}(2|\beta|+d)^{2n}\mid
a_{\beta}\mid^2)^{\frac{1}{2}},
\]
where $n \in \mathbb{N}_{0}$.

The topological dual space to $\mathbf{s}_{d}$, denoted by
$\mathbf{s}_{d}^{\prime}$, is given by
\[
\mathbf{s}_{d}^{\prime}=\{(b_{\beta}): \mbox{for some
}\;(C,m)\in\mathbb{R}\times \mathbb{N}_{0}^{d}, \; \mid
b_{\beta} \mid \leq C(2|\beta|+d)^{m} \mbox{ for all } \beta \},
\]
and the natural pairing of elements from $\mathbf{s}_{d}$
and $\mathbf{s}_{d}^{\prime}$, denoted by $\langle \cdot, \cdot
\rangle$, is given by
\[
\langle (b_{\beta}), (a_{\beta}) \rangle = \sum_{\beta \in
\mathbb{N}_{0}^{d}} b_{\beta}a_{\beta},
\]
for $(b_{\beta}) \in \mathbf{s}_{d}^{\prime}$ and $(a_{\beta}) \in
\mathbf{s}_{d}$.

It is clear that $\mathbf{s}_{d}^{\prime}$ is an algebra with the
 pointwise operations:
\begin{eqnarray*}
(b_{\beta})+(b^{\prime}_{\beta})& = & (b_{\beta}+b^{\prime}_{\beta}) \\
(b_{\beta})\cdot(b^{\prime}_{\beta})& = &
(b_{\beta}b^{\prime}_{\beta}),
\end{eqnarray*}
and $\mathbf{s}_{d}$ is an ideal of $\mathbf{s}_{d}^{\prime}$.




\begin{theorem}[N-representation theorem for $\mathcal{S}(\mathbb{R}^{d})$ and $\mathcal{S}^{\prime}(\mathbb{R}^{d})$]
\label{rpreS} $\mathbf{a)}$ Let
$\mathbf{h}:\mathcal{S}(\mathbb{R}^{d})\rightarrow \mathbf{s}_{d}$
be the application
\[
\mathbf{h}(\varphi)=(\int \varphi (x)h_{\beta}(x) dx).
\]
Then $\mathbf{h}$ is a topological isomorphism. Moreover,
\[
|\mathbf{h}(\varphi)|_n=|\varphi|_n
\]
for all $\varphi \in \mathcal{S}(\mathbb{R}^{d})$.

\noindent $\mathbf{b)}$ Let
$\mathbf{H}:\mathcal{S}^{\prime}(\mathbb{R}^{d})\rightarrow
\mathbf{s}_{d}^{\prime}$ be the application
$\mathbf{H}(T)=(T(h_{\beta}))$. Then $\mathbf{H}$ is a
topological isomorphism. Moreover, if $T \in
\mathcal{S}^{\prime}(\mathbb{R}^{d})$ we have that
\[
T=\sum _{\beta \in \mathbb{N}_0^{d}} T(h_{\beta})h_{\beta}
\]
in the weak sense and for all $\varphi \in
\mathcal{S}(\mathbb{R}^{d})$,
\[
T(\varphi)=\langle \mathbf{H}(T),\mathbf{h}(\varphi)\rangle.
\]
\end{theorem}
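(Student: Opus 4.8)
The plan is to establish part (a) first and then deduce part (b) from it by transposition. For (a), I would begin with the observation that the isometry claim is essentially a matter of matching two definitions: writing $a_\beta = \int \varphi(x)h_\beta(x)\,dx$ for the $\beta$-th coordinate of $\mathbf{h}(\varphi)$ and comparing the definition of $|\varphi|_n$ with that of the norm on $\mathbf{s}_d$, one reads off
\[
|\varphi|_n^2 = \sum_{\beta\in\mathbb{N}_0^d}(2|\beta|+d)^{2n}a_\beta^2 = |\mathbf{h}(\varphi)|_n^2 .
\]
Thus $\mathbf{h}$ preserves every norm $|\cdot|_n$ exactly; in particular it is continuous, injective, and its range lies in $\mathbf{s}_d$. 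The genuine content of (a) is therefore surjectivity together with continuity of the inverse, and the latter is again free once the isometry is in hand.

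For surjectivity I would take $(a_\beta)\in\mathbf{s}_d$ and set $\varphi:=\sum_\beta a_\beta h_\beta$. The finite partial sums lie in $\mathcal{S}(\mathbb{R}^d)$, and by the isometry the $|\cdot|_n$-distance between two of them equals the corresponding tail of $\sum_\beta(2|\beta|+d)^{2n}a_\beta^2$, which is finite and hence has arbitrarily small tails; so the partial sums are Cauchy in every $|\cdot|_n$. Since $\mathcal{S}(\mathbb{R}^d)$ is a Fréchet space for the family $\{|\cdot|_n\}$ (equivalent to the Schwartz family), the series converges to some $\varphi\in\mathcal{S}(\mathbb{R}^d)$, and orthonormality of $\{h_\beta\}$ in $L^2$ forces $\int \varphi h_\beta\,dx = a_\beta$, i.e. $\mathbf{h}(\varphi)=(a_\beta)$. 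This finishes (a).

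Turning to (b), I would first verify that $\mathbf{H}(T)=(T(h_\beta))$ lands in $\mathbf{s}_d'$. Continuity of $T$ yields constants $C,m$ with $|T(\psi)|\le C|\psi|_m$ for all $\psi$; taking $\psi=h_\beta$ and noting that $\mathbf{h}(h_\beta)$ is the $\beta$-th coordinate sequence, so $|h_\beta|_m=(2|\beta|+d)^m$ by the isometry, gives $|T(h_\beta)|\le C(2|\beta|+d)^m$, which is exactly the growth condition defining $\mathbf{s}_d'$. Next, for $\varphi\in\mathcal{S}(\mathbb{R}^d)$ with $a_\beta=\int\varphi h_\beta\,dx$, the series $\varphi=\sum_\beta a_\beta h_\beta$ converges in $\mathcal{S}(\mathbb{R}^d)$ by (a), so continuity of $T$ lets me pass the functional through the sum:
\[
T(\varphi)=\sum_\beta a_\beta\,T(h_\beta)=\langle \mathbf{H}(T),\mathbf{h}(\varphi)\rangle,
\]
which establishes the pairing formula and, read as $T(\varphi)=\sum_\beta T(h_\beta)\int h_\beta\varphi\,dx$, the weak expansion $T=\sum_\beta T(h_\beta)h_\beta$.

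Finally, for the topological isomorphism in (b) I would argue by transposition. Writing $\mathbf{h}^{t}:\mathbf{s}_d'\to\mathcal{S}'$ for the transpose of $\mathbf{h}$, defined by $(\mathbf{h}^{t}b)(\varphi):=\langle b,\mathbf{h}(\varphi)\rangle$, the pairing identity reads $\mathbf{h}^{t}\circ\mathbf{H}=\mathrm{id}_{\mathcal{S}'}$; since $\mathbf{h}$ is a topological isomorphism by (a), its transpose is a topological isomorphism of the duals, and a right inverse of a bijection is its inverse, so $\mathbf{H}=(\mathbf{h}^{t})^{-1}$ inherits the isomorphism property. The only step I expect to require care is the interchange of $T$ with the infinite Hermite series: this is legitimate precisely because the convergence produced in (a) takes place in the Schwartz topology and not merely in $L^2$, so I would be sure to invoke $\mathcal{S}(\mathbb{R}^d)$-convergence explicitly. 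Everything else is bookkeeping around the exact isometry $|\mathbf{h}(\varphi)|_n=|\varphi|_n$.
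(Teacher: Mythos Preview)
Your proof is correct and complete. The paper, however, does not actually prove this theorem: it simply cites Reed--Simon \cite{Red1} p.~143 and Schwartz \cite{Schw} p.~260 and moves on, treating the result as classical background. So there is no ``paper's own proof'' to compare against in any substantive sense.

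That said, your argument is essentially the standard one found in those references, organized around the exact isometry $|\mathbf{h}(\varphi)|_n=|\varphi|_n$ (which, as you note, is tautological from the definitions in the paper) and then a duality/transposition step for part~(b). The only point worth flagging is minor: in your transposition argument you write $\mathbf{h}^t\circ\mathbf{H}=\mathrm{id}_{\mathcal{S}'}$ and then say ``a right inverse of a bijection is its inverse''. This is fine, but strictly speaking you should also observe that $\mathbf{H}$ is itself the transpose of $\mathbf{h}^{-1}$ (or equivalently check directly that $\mathbf{H}\circ\mathbf{h}^t=\mathrm{id}_{\mathbf{s}_d'}$), so that continuity of $\mathbf{H}$ in the weak-$*$ topologies is inherited from the continuity of $\mathbf{h}^{-1}$ rather than merely from $\mathbf{H}$ being the set-theoretic inverse of a continuous bijection. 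This is a one-line addition and does not affect the soundness of your approach.
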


\begin{proof}
See  for instance \cite{Red1} pp. 143 or \cite{Schw} pp. 260.
\end{proof}

The sequences $\mathbf{h}(\varphi)$ and $\mathbf{H}(T)$ will be
referred to as  the {\it Hermite coefficients} of the tempered
function $\varphi$ and the tempered  distribution $T$ respectively.

\begin{corollary}\label{coro}
 For every $T \in \mathcal{S}^{\prime}(\mathbb{R}^{d})$ there exists $n \in
\mathbb{N}_0$, such that
\[
\|T \|^2_{-n}:=\sum_{\beta \in \mathbb{N}^d_0}(2|\beta|+d)^{-2n}T(h_\beta)^2< \infty.
\]
\end{corollary}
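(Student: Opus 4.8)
The plan is to extract a polynomial growth bound on the Hermite coefficients $T(h_\beta)$ directly from the $N$-representation theorem, and then to take the order $n$ large enough that the decay factor $(2|\beta|+d)^{-2n}$ dominates this growth and renders the series summable.

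First I would apply part b) of Theorem \ref{rpreS}, which asserts that $\mathbf{H}(T)=(T(h_\beta))$ belongs to $\mathbf{s}_d^{\prime}$. By the very definition of $\mathbf{s}_d^{\prime}$ recalled above, this is equivalent to the existence of a constant $C$ and a nonnegative integer exponent $m$ such that
\[
|T(h_\beta)| \leq C\,(2|\beta|+d)^{m}
\]
for every $\beta \in \mathbb{N}_0^d$. Squaring this inequality and substituting into the quantity to be controlled yields
\[
\sum_{\beta \in \mathbb{N}_0^d}(2|\beta|+d)^{-2n}\,T(h_\beta)^2 \;\leq\; C^2 \sum_{\beta \in \mathbb{N}_0^d}(2|\beta|+d)^{2m-2n},
\]
so the whole matter reduces to establishing convergence of the right-hand series for a suitable choice of $n$.

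Next I would collapse the multi-index sum into a scalar one by grouping the indices according to their order $|\beta|=k$. The number of $\beta \in \mathbb{N}_0^d$ with $|\beta|=k$ equals $\binom{k+d-1}{d-1}$, which grows polynomially in $k$ of degree $d-1$; hence
\[
\sum_{\beta \in \mathbb{N}_0^d}(2|\beta|+d)^{2m-2n} = \sum_{k=0}^{\infty}\binom{k+d-1}{d-1}(2k+d)^{2m-2n},
\]
a series comparable to $\sum_{k\geq 1} k^{(d-1)+2m-2n}$.

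The decisive point, and the only genuine obstacle, is to arrange that this exponent be strictly less than $-1$. This amounts to the requirement $(d-1)+2m-2n<-1$, that is, $n > m + d/2$. Choosing any integer $n$ exceeding $m+d/2$ then renders the scalar series a convergent $p$-series, and therefore $\|T\|_{-n}^2<\infty$, which is exactly the claim. Everything apart from this elementary counting estimate follows immediately from the description of $\mathbf{s}_d^{\prime}$ furnished by the representation theorem, so I expect no further technical difficulty.
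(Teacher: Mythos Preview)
Your proposal is correct and follows the same line as the paper's proof: invoke the $N$-representation theorem to obtain $|T(h_\beta)|\le C(2|\beta|+d)^{m}$, then choose $n$ large enough to make the weighted series converge. In fact your counting argument yielding $n>m+d/2$ is more careful than the paper's bare choice $n=l+1$, which as written only guarantees convergence in dimension $d=1$.
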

\begin{proof}
By Theorem \ref{rpreS}, $(T(h_{\beta})) \in \mathbf{s}_d^{\prime}$. Thus,
there exists $(C, l) \in \mathbb{R}\times \mathbb{N}_0$ such that
$| T(h_{\beta})| \leq C(2|\beta| + d)^l$ for all $\beta \in \mathbb{N}^d_0$. Now,
taking $n = l+1$ the Corollary follows.
\end{proof}

\subsection{Tempered generalized functions}

The aim of this subsection is to give an extension to the multidimensional case of the theory of tempered generalized  functions
introduced  by the authors in \cite{CO1}. 
Let $\mathcal{S}^1_T(\mathbb{R}^d)$ be the set of functions $f:[0,T]\times\mathbb{R}^d
\rightarrow \mathbb{R}$ such that for each $t \in [0,T]$,
$f(t,\cdot) \in \mathcal{S}(\mathbb{R}^d)$ and for each $x \in \mathbb{R}^d$,
$f(\cdot, x) \in C^1([0,T])$. It is clear that
$\mathcal{S}^1_T(\mathbb{R}^{d})^{\mathbb{N}^d_0}$ has the structure of an associative,
commutative differential algebra with the natural operations:
\begin{eqnarray*}
(f_{\beta})+(g_{\beta})& := & (f_{\beta}+ g_{\beta}) \\
a(f_{\beta})& := & (af_{\beta}) \\
(f_{\beta})\cdot (g_{\beta})& := & (f_{\beta} g_{\beta}) \\
\partial^{\alpha}(f_{\beta})& := & (\partial_x^{\alpha}f_{\beta}) \mbox{ for each $\alpha \in \mathbb{N}_0^d$.}
\end{eqnarray*}
In order to define the $1$-time dependent tempered generalized functions, we consider $\mathcal{H}_{T,1,d}^{\prime}$
the subalgebra of $\mathcal{S}^1_T(\mathbb{R}^{d})^{\mathbb{N}^d_0}$ given by
\[
\{(f_{\beta})\in \mathcal{S}^1_T(\mathbb{R}^d)^{\mathbb{N}^d_0}  :  \mbox{for each $n
\in \mathbb{N}_0$,} \ (\sup_{t \in [0,T]}| f_{\beta}(t, \cdot)
|_{n}),\ (\sup_{t \in [0,T]}| \frac{\partial f_{\beta}}{\partial
t}(t, \cdot) |_{n})\in \mathbf{s}_d^{\prime}  \}
\]
and $\mathcal{H}_{T,1,d}$ its differential ideal given by
\[
\{(f_{\beta})\in \mathcal{S}^1_T(\mathbb{R}^d)^{\mathbb{N}^d_0}  :  \mbox{for each $n
\in \mathbb{N}_0$,} \ (\sup_{t \in [0,T]}| f_{\beta}(t, \cdot)
|_{n}),\ (\sup_{t \in [0,T]}| \frac{\partial f_{\beta}}{\partial
t}(t, \cdot) |_{n})\in \mathbf{s}_d  \}.
\]

\noindent  The $1$-time dependent tempered algebra on $\mathbb{R}^d$ is
defined by

\[
\mathcal{H}^1_T(\mathbb{R}^d):=\mathcal{H}_{T,1,d}^{\prime}/\mathcal{H}_{T,1,d}
\]

\noindent The elements of $\mathcal{H}^1_T(\mathbb{R}^d)$ are called $1$-time dependend tempered generalized
functions. Let $(f_\beta) \in \mathcal{H}_{T,1,d}^{\prime}$ we shall  use $[f_\beta]$ to
denote the equivalent class $(f_\beta) + \mathcal{H}_{T,1,d}$.

\begin{remark}
In order to introduce the $0$-dependent tempered generalized functions, we consider
$\mathcal{S}^0_T(\mathbb{R}^d)$ the of the set of functions $f:[0,T]\times\mathbb{R}^d
\rightarrow \mathbb{R}$ such that for each $t \in [0,T]$,
$f(t,\cdot) \in \mathcal{S}(\mathbb{R}^d)$ and for each $x \in \mathbb{R}^d$,
$f(\cdot, x) \in C([0,T])$. Let $\mathcal{H}_{T,0,d}^{\prime}$ be the subalgebra given by
\[
\{(f_{\beta})\in \mathcal{S}^0_T(\mathbb{R}^d)^{\mathbb{N}^d_0}  :  \mbox{for each $n
\in \mathbb{N}_0$,} \ (\sup_{t \in [0,T]}| f_{\beta}(t, \cdot)
|_{n}) \in \mathbf{s}_d^{\prime}  \}
\]
and $\mathcal{H}_{T,0,d}$ its differential ideal given by
\[
\{(f_{\beta})\in \mathcal{S}_T(\mathbb{R}^d)^{\mathbb{N}^d_0}  :  \mbox{for each $n
\in \mathbb{N}_0$,} \ (\sup_{t \in [0,T]}| f_{\beta}(t, \cdot)
|_{n}) \in \mathbf{s}_d  \}.
\]

\noindent  The $0$-time dependent  tempered algebra on $\mathbb{R}^d$ is
defined by

\[
\mathcal{H}^0_T(\mathbb{R}^d):=\mathcal{H}_{T,0,d}^{\prime}/\mathcal{H}_{T,0,d}
\]

\end{remark}

\begin{remark}
In a similar way we can define the tempered algebra
\[
\mathcal{H}(\mathbb{R}^d):= \mathcal{H}_{d}^{\prime} /\mathcal{H}_{d}
\]
\noindent where
\[
\mathcal{H}_{d}^{\prime}:=\{(f_{\beta})\in \mathcal{S}(\mathbb{R}^d)^{\mathbb{N}^d_0}  :  \mbox{for each $n
\in \mathbb{N}_0$,} \ ( |  f_{\beta} |_{n} ) \in \mathbf{s}_d^{\prime}  \}
\]
\noindent and
\[
 \mathcal{H}_{d} := \{(f_{\beta})\in \mathcal{S}(\mathbb{R}^d)^{\mathbb{N}^d_0}  :  \mbox{for each $n
\in \mathbb{N}_0$,} \  (| f_{\beta}|_{n})\in \mathbf{s}_d  \}.
\]
\noindent The
elements of $\mathcal{H}(\mathbb{R}^d)$ are called tempered generalized
functions.
\end{remark}

\begin{proposition}
\begin{enumerate}
\item $\mathcal{H}(\mathbb{R}^d)$ is a subalgebra of $\mathcal{H}^0_T(\mathbb{R}^d)$ $(\mathcal{H}^1_T(\mathbb{R}^d))$.
\item Let $[f_{\beta}] \in
\mathcal{H}^1_T(\mathbb{R}^d)$. Then
\[
\frac{\partial}{\partial t}[f_{\beta}(t,
\cdot )]:=[\frac{\partial f_{\beta}}{\partial t}(t, \cdot )]
\in \mathcal{H}^0(\mathbb{R}^d)
\]
for every $t \in [0,T]$.
\item Let $h \in C^1([0,T])$ and $[f_{\beta}] \in
\mathcal{H}(\mathbb{R}^d)$. Then $h[f_{\beta}]:=[hf_{\beta}] \in
\mathcal{H}^1_T(\mathbb{R}^d)$.
\end{enumerate}
\end{proposition}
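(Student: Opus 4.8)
The plan is to handle the three assertions separately; in each case I work with representatives at the level of the moderate algebras $\mathcal{H}_{d}^{\prime}$, $\mathcal{H}_{T,0,d}^{\prime}$ and $\mathcal{H}_{T,1,d}^{\prime}$, and then check that the construction descends to the quotients. The one fact used throughout is that $\mathbf{s}_d$ and $\mathbf{s}_{d}^{\prime}$ are solid and stable under scalar multiplication: if $|a_{\beta}| \leq |b_{\beta}|$ termwise and $(b_{\beta})$ belongs to $\mathbf{s}_d$ (resp. $\mathbf{s}_{d}^{\prime}$), then so does $(a_{\beta})$, and $(C a_{\beta})$ stays in the same space for every constant $C$. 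Both properties are immediate from the definitions of the norms $|\cdot|_n$ and of the polynomial growth condition defining $\mathbf{s}_{d}^{\prime}$.

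For part 1, I would introduce the embedding $\iota$ that sends a time-independent family $(f_{\beta})\in\mathcal{H}_{d}^{\prime}$ to the constant-in-time family $\widetilde{f}_{\beta}(t,x):=f_{\beta}(x)$. For each $t$ one has $\widetilde{f}_{\beta}(t,\cdot)=f_{\beta}\in\mathcal{S}(\mathbb{R}^d)$, each $t\mapsto\widetilde{f}_{\beta}(t,x)$ is constant and hence $C^1([0,T])$, $\sup_t|\widetilde{f}_{\beta}(t,\cdot)|_n=|f_{\beta}|_n$, and $\partial_t\widetilde{f}_{\beta}\equiv 0$. Thus $(\sup_t|\widetilde{f}_{\beta}(t,\cdot)|_n)=(|f_{\beta}|_n)\in\mathbf{s}_{d}^{\prime}$ and the time-derivative sequence is the null sequence, so $\iota$ carries $\mathcal{H}_{d}^{\prime}$ into $\mathcal{H}_{T,1,d}^{\prime}$ (a fortiori into $\mathcal{H}_{T,0,d}^{\prime}$, whose definition drops the derivative condition). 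As $\iota$ respects sums, scalar multiples and products, it is an algebra homomorphism, and the same computation with $(|f_{\beta}|_n)\in\mathbf{s}_d$ shows $\iota(\mathcal{H}_{d})\subset\mathcal{H}_{T,1,d}$, so $\iota$ descends to the quotients. Injectivity of the descended map is the converse: if $\iota(f_{\beta})$ is null, then $(|f_{\beta}|_n)=(\sup_t|\widetilde{f}_{\beta}(t,\cdot)|_n)\in\mathbf{s}_d$ for every $n$, hence $(f_{\beta})\in\mathcal{H}_{d}$ and $[f_{\beta}]=0$. This realizes $\mathcal{H}(\mathbb{R}^d)$ as a subalgebra.

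For part 2, fix $t\in[0,T]$ and take $(f_{\beta})\in\mathcal{H}_{T,1,d}^{\prime}$. By definition $(\sup_t|\partial_t f_{\beta}(t,\cdot)|_n)_{\beta}\in\mathbf{s}_{d}^{\prime}$ for every $n$; since $|\partial_t f_{\beta}(t,\cdot)|_n\leq\sup_t|\partial_t f_{\beta}(t,\cdot)|_n$ at the fixed $t$, solidity of $\mathbf{s}_{d}^{\prime}$ gives $(|\partial_t f_{\beta}(t,\cdot)|_n)_{\beta}\in\mathbf{s}_{d}^{\prime}$, so $(\partial_t f_{\beta}(t,\cdot))_{\beta}\in\mathcal{H}_{d}^{\prime}$ and $[\partial_t f_{\beta}(t,\cdot)]\in\mathcal{H}(\mathbb{R}^d)$. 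Well-definedness is the same domination applied to the ideal: if $(f_{\beta})\in\mathcal{H}_{T,1,d}$ then $(\sup_t|\partial_t f_{\beta}(t,\cdot)|_n)\in\mathbf{s}_d$, hence $(|\partial_t f_{\beta}(t,\cdot)|_n)\in\mathbf{s}_d$ and $(\partial_t f_{\beta}(t,\cdot))\in\mathcal{H}_{d}$, so the class does not depend on the chosen representative.

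Finally, for part 3 I would set $(hf_{\beta})(t,x):=h(t)f_{\beta}(x)$ for $(f_{\beta})\in\mathcal{H}_{d}^{\prime}$. Then $(hf_{\beta})(t,\cdot)=h(t)f_{\beta}\in\mathcal{S}(\mathbb{R}^d)$ for each $t$, and $t\mapsto h(t)f_{\beta}(x)$ lies in $C^1([0,T])$ because $h\in C^1([0,T])$. From the homogeneity $|h(t)f_{\beta}|_n=|h(t)|\,|f_{\beta}|_n$ and from $\partial_t(hf_{\beta})(t,x)=h'(t)f_{\beta}(x)$ one obtains $\sup_t|(hf_{\beta})(t,\cdot)|_n\leq\|h\|_{\infty}|f_{\beta}|_n$ and $\sup_t|\partial_t(hf_{\beta})(t,\cdot)|_n\leq\|h'\|_{\infty}|f_{\beta}|_n$, both finite multiples of $|f_{\beta}|_n$ since $h$ and $h'$ are continuous on the compact interval $[0,T]$. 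Scalar stability and solidity of $\mathbf{s}_{d}^{\prime}$ then put both sup-sequences in $\mathbf{s}_{d}^{\prime}$, so $(hf_{\beta})\in\mathcal{H}_{T,1,d}^{\prime}$; replacing $\mathbf{s}_{d}^{\prime}$ by $\mathbf{s}_d$ gives $h\,\mathcal{H}_{d}\subset\mathcal{H}_{T,1,d}$ and hence well-definedness on classes. The whole argument is routine once solidity is available; the only step that needs genuine care is the passage in part 2 from uniform-in-$t$ moderateness to moderateness at a single fixed time, which is precisely where solidity of $\mathbf{s}_{d}^{\prime}$ is used.
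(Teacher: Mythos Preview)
Your proof is correct and follows exactly the route the paper intends: the paper's own proof reads in its entirety ``The proofs are straightforward from the definitions,'' and you have simply written out those straightforward verifications in full, using solidity and scalar stability of $\mathbf{s}_d$ and $\mathbf{s}_d'$ to pass between the moderate algebras and their ideals.
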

\begin{proof}
The proofs are straigthforward from the definitions.
\end{proof}

We observe that there exists a natural linear embedding $\iota:\mathcal{S}^{\prime}(\mathbb{R}^d)\rightarrow \mathcal{H}(\mathbb{R}^d)$, given by
\[
\iota(T)=[T_\beta],
\]
where $T_\beta=\sum_{\gamma \leq \beta}T(h_\gamma)h_\gamma$. Moreover, we have that

$\mathbf{a)}$ For all $\varphi \in \mathcal{S}(\mathbb{R}^d)$, $\iota(\varphi)=[\varphi]$,

$\mathbf{b)}$ For all $\varphi,\psi\in \mathcal{S}(\mathbb{R}^d)$,
$\iota(\varphi\psi)=\iota(\varphi) \cdot \iota(\psi)$,

$\mathbf{c)}$ For all $\alpha \in \mathbb{N}_0^d$, $\iota \circ \partial^{\alpha}=\partial^{\alpha} \circ \iota$.

\noindent  The translation operator $\tau_x: \mathcal{H}^0_T(\mathbb{R}^d)\rightarrow \mathcal{H}^0_T(\mathbb{R}^d)$ ($x \in \mathbb{R}^d$) is defined by
\[
 \tau_x[f_\beta]:=[\tau_xf_\beta].
\]
\noindent It  follows from Lemma 1.1 that $\tau_x$ is well defined. Analogously, $\tau_x: \mathcal{H}^1_T(\mathbb{R}^d)\rightarrow \mathcal{H}^1_T(\mathbb{R}^d)$
 ($x \in \mathbb{R}^d$) is well defined.

\noindent In the algebra $\mathcal{H}(\mathbb{R}^d)$ we have a weak
equality, namely, the association of tempered generalized functions.
More precisely, we say that the tempered generalized functions
$[f_\beta]$ and $[g_\beta]$ are associated, and denote this
association by $[f_\beta] \approx [g_\beta]$, if for all $\varphi
\in \mathcal{S}(\mathbb{R}^d)$,
\[
\lim_{\beta \rightarrow \infty}\int_{\mathbb{R}^n}(f_\beta(x)-g_\beta(x))\varphi(x)dx=0.
\]
We observe that $\approx$ is a equivalence relation on $\mathcal{H}(\mathbb{R}^d)$.

\begin{proposition}\label{p2}
\begin{enumerate}
\item Let $T$ be a tempered distribution and $x \in \mathbb{R}^d$. Then $\iota(\tau_xT) \approx \tau_x\iota(T)$.
\item Let $T$ and $S$ be tempered distributions such that $\iota(T) \approx \iota(S)$. Then $T=S$.
\end{enumerate}
\end{proposition}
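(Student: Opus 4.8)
The plan is to reduce both assertions to the weak-sense Hermite expansion of Theorem \ref{rpreS}(b), which guarantees that the partial sums converge to the distribution when tested against Schwartz functions. The fundamental observation is that for any $T \in \mathcal{S}^{\prime}(\mathbb{R}^d)$ and any $\varphi \in \mathcal{S}(\mathbb{R}^d)$,
\[
\int_{\mathbb{R}^d} T_\beta(y)\varphi(y)\,dy = \sum_{\gamma \leq \beta} T(h_\gamma)\int_{\mathbb{R}^d} h_\gamma(y)\varphi(y)\,dy = \sum_{\gamma \leq \beta} T(h_\gamma)\,\mathbf{h}(\varphi)_\gamma,
\]
so that letting $\beta \to \infty$ and invoking $T(\varphi) = \langle \mathbf{H}(T),\mathbf{h}(\varphi)\rangle$ yields $\int T_\beta \varphi \to T(\varphi)$. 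Here I read the multi-index limit $\beta\to\infty$ in the directed sense compatible with the statement ``$T=\sum_\beta T(h_\beta)h_\beta$ in the weak sense''.

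For the second assertion I would argue directly. The hypothesis $\iota(T)\approx\iota(S)$ means that $\int (T_\beta - S_\beta)\varphi \to 0$ for every $\varphi \in \mathcal{S}(\mathbb{R}^d)$. By the observation above the left-hand side converges to $T(\varphi) - S(\varphi)$, so $T(\varphi) = S(\varphi)$ for all $\varphi$, whence $T = S$. This part is essentially immediate once the convergence $\int T_\beta \varphi \to T(\varphi)$ is in hand.

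For the first assertion I would compute the two partial-sum sequences separately and show that both converge weakly to $\tau_x T$. Applying the observation to the distribution $\tau_x T$ gives $\int (\tau_x T)_\beta \varphi \to (\tau_x T)(\varphi) = T(\tau_{-x}\varphi)$. For the other sequence, since $T_\beta$ is a genuine function the change of variables $z = y - x$ gives
\[
\int_{\mathbb{R}^d}(\tau_x T_\beta)(y)\varphi(y)\,dy = \int_{\mathbb{R}^d} T_\beta(z)\,\tau_{-x}\varphi(z)\,dz \longrightarrow T(\tau_{-x}\varphi),
\]
again by the observation, now tested against $\tau_{-x}\varphi \in \mathcal{S}(\mathbb{R}^d)$. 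Since both sequences share the limit $T(\tau_{-x}\varphi)$, their difference tends to $0$, which is exactly $\iota(\tau_x T) \approx \tau_x \iota(T)$.

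The computations are routine; the only points requiring care are the bookkeeping in the translation identity $(\tau_x T)(h_\gamma) = T(\tau_{-x}h_\gamma)$ as against the change-of-variables formula above, and the need to keep $\tau_{-x}\varphi$ inside $\mathcal{S}(\mathbb{R}^d)$ so that Theorem \ref{rpreS}(b) applies. The genuine content of the proposition is therefore concentrated entirely in the weak convergence furnished by the representation theorem; no estimate from Lemma \ref{lema} or from the multiplication inequality is actually needed here.
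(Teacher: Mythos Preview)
Your proposal is correct and follows essentially the same route as the paper. For part~1 your computation is line-for-line what the paper does: shift the translation onto the test function, invoke weak convergence of $T_\beta$ to $T$ from Theorem~\ref{rpreS}(b), and compare with the partial sums of $\tau_xT$. For part~2 the paper chooses $\varphi=h_\alpha$ (so that $T_\beta(h_\alpha)=T(h_\alpha)$ exactly for $\beta\geq\alpha$) and then appeals to Theorem~\ref{rpreS}(b) to conclude $T=S$, whereas you test against arbitrary $\varphi$ and conclude $T(\varphi)=S(\varphi)$ directly; both are valid and the difference is cosmetic.
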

\begin{proof}
1) Let $\varphi \in \mathcal{S}(\mathbb{R}^d)$; applying Theorem
\ref{rpreS} and elementary properties of the translation, we obtain
\begin{eqnarray*}
\lim_{\beta \rightarrow \infty}\int_{\mathbb{R}^d}\tau_xT_\beta(y)\varphi(y)dy & = & \lim_{\beta \rightarrow \infty}\int_{\mathbb{R}^d}T_\beta(y)\tau_{-x}\varphi(y)dy \\
 & = & T(\tau_{-x}\varphi) \\
 & = & \tau_xT(\varphi) \\
 & = &  \lim_{\beta \rightarrow \infty}\int_{\mathbb{R}^n}(\tau_xT)_\beta(y)\varphi(y)dy.
\end{eqnarray*}

2) For all $\alpha \in \mathbb{N}^d_0$,
\[
T(h_\alpha)-S(h_\alpha)=\lim_{\beta \rightarrow \infty}T_\beta(h_\alpha)-S_\beta(h_\alpha)=0.
\]
Theorem \ref{rpreS}  implies that  $T=S$.
\end{proof}

\begin{remark} We would like to  recall that he algebra  $\mathcal{H}(\mathbb{R}^d)$ is identical to (a sequential version of) the space
$G_{S(\mathbb{R}^d)}$, it has been studied extensively by G. Garetto
and their coauthors (see \cite{gare1}, \cite{gare2} and
\cite{gare3}). They introduced the space $G_{S(\mathbb{R}^d)}$  via
the family of seminorms $\{ \|\cdot\|_{\alpha, \beta, \infty}:
\alpha, \beta \in \mathbb{N}^d_0\}$. Notice that it does not matter
which family of seminorms is being used in this definition, as long
as it generates
the same locally convex topology (see \cite{gare1}).\\
Our approach differs, we have introduced the
 algebra $\mathcal{H}(\mathbb{R}^d))$ via the seminorms $\| \ \|_{m}$  since we are thinking in approximations of distributions
 (induced by Hilbert spaces) in terms of  orthogonal series in contrast with the classic theory where the approximation is done by convolution.
See \cite{CO2} for an application of this idea to stochastic distributions.
\end{remark}

\begin{remark} We would like to  mention that other   general properties of $\mathcal{H}(\mathbb{R}^d)$ can be
studied in this setting. For example the concepts of point value,
integral and Fourier transform for elements in
$\mathcal{H}(\mathbb{R}^d)$ can be defined,  see  \cite{CO1} for the
one-dimensional case.
   \end{remark}

\section{ It\^o's formula for tempered generalized functions}

\noindent Let $(\Omega, \mathcal{F}, \{ \mathcal{F}_t: t \in [0,T]
\}, \mathbb{P})$ be a filtered probability space, which satisfies
the usual hypotheses. For a recent account of stochastic calculus
we refer the reader to the book of  Ph. Protter \cite{Protter}.

\begin{definition}
Let $X$ be a $\mathbb{R}^d$ valued continuous jointly measurable
process, $V$  a continuous finite variation process and $[f_\beta]
\in \mathcal{H}^0_T(\mathbb{R}^d)$. Define the integral of
$\tau_X[f_\beta]$ in relation to $V$, from $0$ to $t$,  and  denoted
by $\int_0^t \tau_{X_s}[f_\beta]dV_s$, by:
\[
[\int_0^t\tau_{X_s}f_\beta(s, \cdot) dV_s],
\]
where the integral is given in the sense of Bochner-Stieltjes.
\end{definition}
For each $\omega \in \Omega$ and $t \in [0,T]$, we have that
$[\int_0^t\tau_{X_s}f_\beta(s,\cdot) dV_s(\omega)]$ is well defined
as an element of $\mathcal{H}^0_T(\mathbb{R}^d)$. In fact, since
$\tau_{X_s(\omega)}f_\beta(s,\cdot) \in \mathcal{S}^0_T(\mathbb{R}^d)$
and making use of definitions and the Lemma \ref{lema} we see that
\begin{eqnarray}\label{for1}
|\int_0^t \tau_{X_s}f_\beta(s, \cdot) dV_s(\omega)|_n & \leq & \int_0^t |\tau_{X_s(\omega)}f_\beta(s, \cdot)|_n d|V|_s(\omega) \nonumber\\
 & \leq & ( \int_0^t P_n(|X_s(\omega)|) d|V|_s(\omega) ) \sup_{s \in [0,T]}|f_\beta(s, \cdot)|_n
\end{eqnarray}
where $|V|_t(\omega)$ is the total variation of $V$ in $[0,t]$.

\begin{theorem}
Let $f=[f_\beta] \in \mathcal{H}^1_T(\mathbb{R}^d)$ and $X=(X^1,...,X^d)$ be a $\mathbb{R}^d$ valued continuous
semimartingale. Then
\begin{eqnarray}\label{ito}
\tau_{X_t}f & = & \tau_{X_0}f + \int_0^t \partial_t\tau_{X_s}f(s, \cdot)ds-\int_0^t\nabla\tau_{X_s}f \cdot dX_s \nonumber \\
 & & +\frac{1}{2}\sum_{i,j=1}^d\int_0^t\partial_{ij}\tau_{X_s}fd \langle X^i, X^j \rangle_s
\end{eqnarray}
where $\int_0^t\nabla\tau_{X_s}f \cdot dX_s$ is defined as $[\sum_{i=1}^d\int_0^t\partial_i\tau_{X_s}f_\beta dX_s^i]$.
\end{theorem}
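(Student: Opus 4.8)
The plan is to prove the identity at the level of representatives and then to test it against point evaluations. All the operations appearing in (\ref{ito})---translation, the derivatives $\partial_t,\partial_i,\partial_{ij}$, and the three integrals---are defined componentwise on representatives, and equality in $\mathcal{H}^1_T(\mathbb{R}^d)$ means equality modulo the ideal $\mathcal{H}_{T,1,d}$; hence it suffices to establish (\ref{ito}) as an identity of $\mathcal{S}(\mathbb{R}^d)$-valued processes for each fixed index $\beta$, and in fact I would prove the stronger statement that the representatives coincide exactly. Since the point evaluations $\delta_y$, $y\in\mathbb{R}^d$, are continuous linear functionals on $\mathcal{S}(\mathbb{R}^d)$ that separate its points, I would reduce further to verifying, for every $\beta$ and every $y$, the scalar identity obtained by applying $\delta_y$ to both sides (the two sides being continuous in $y$, pointwise agreement recovers the $\mathcal{S}(\mathbb{R}^d)$-valued identity).

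First I would fix $\beta$ and $y$ and set $g(t,x):=f_\beta(t,y-x)$, so that $\delta_y(\tau_{X_t}f_\beta(t,\cdot))=g(t,X_t)$. Because $f_\beta(\cdot,x)\in C^1([0,T])$ and $f_\beta(t,\cdot)\in\mathcal{S}(\mathbb{R}^d)$ is smooth, the map $g$ is of class $C^{1,2}$, so the classical It\^o formula for the continuous semimartingale $X$ applies to $g(t,X_t)$. A direct chain-rule computation gives $\partial_t g(t,x)=(\partial_t f_\beta)(t,y-x)$, $\partial_{x_i}g(t,x)=-(\partial_i f_\beta)(t,y-x)$ and $\partial_{x_ix_j}g(t,x)=(\partial_{ij}f_\beta)(t,y-x)$; substituting these into the classical formula produces exactly the scalar version of (\ref{ito}) evaluated at $y$, the minus sign on the $dX$ term being precisely the one arising from differentiating the translation.

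It then remains to check that applying $\delta_y$ to the three $\mathcal{S}(\mathbb{R}^d)$-valued integrals on the right of (\ref{ito}) yields the corresponding scalar integrals of the evaluated integrands. For the drift term $\int_0^t\partial_t\tau_{X_s}f_\beta(s,\cdot)\,ds$ and the quadratic-variation term $\int_0^t\partial_{ij}\tau_{X_s}f_\beta(s,\cdot)\,d\langle X^i,X^j\rangle_s$, which are Bochner--Stieltjes integrals in $\mathcal{S}(\mathbb{R}^d)$, the interchange is immediate, as a continuous linear functional commutes with the Bochner integral; the integrability needed to define these integrals is supplied by an estimate of the type (\ref{for1}), using Lemma \ref{lema} together with the defining bounds of $\mathcal{H}_{T,1,d}^{\prime}$. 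The delicate point is the stochastic term $\sum_i\int_0^t\partial_i\tau_{X_s}f_\beta(s,\cdot)\,dX_s^i$: one must show that the $\mathcal{S}(\mathbb{R}^d)$-valued stochastic integral commutes with $\delta_y$, i.e. that $\delta_y$ may be brought inside to produce the scalar It\^o integral $\int_0^t(\partial_i f_\beta)(s,y-X_s)\,dX_s^i$.

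I expect this commutation to be the main obstacle. I would handle it by the standard approximation argument: approximate the stochastic integral by Riemann--Stieltjes sums along a refining sequence of partitions, through which $\delta_y$ passes trivially since each sum is finite, and then pass to the limit using the continuity of $\delta_y$ on $\mathcal{S}(\mathbb{R}^d)$ together with the fact that the integrand $s\mapsto\partial_i\tau_{X_s}f_\beta(s,\cdot)$ is a continuous, adapted $\mathcal{S}(\mathbb{R}^d)$-valued process enjoying the uniform-in-$s$ seminorm bounds guaranteed by membership in $\mathcal{H}_{T,1,d}^{\prime}$ and by Lemma \ref{lema}. Once the interchange is established for all three integrals, the scalar identity of the second paragraph holds for every $y$ and every $\beta$, which yields (\ref{ito}) at the level of representatives and hence in $\mathcal{H}^1_T(\mathbb{R}^d)$.
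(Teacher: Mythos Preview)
Your approach is essentially the same as the paper's: apply the classical It\^o formula to each representative $f_\beta$ at each spatial point, obtaining the pointwise identity (\ref{ito1}), and then pass to equivalence classes. The paper's argument is more economical on the point you flag as delicate: it does not introduce an a priori $\mathcal{S}(\mathbb{R}^d)$-valued stochastic integral and then verify commutation with $\delta_y$; instead, the pointwise identity (\ref{ito1}) itself shows that the map $x\mapsto\sum_i\int_0^t\partial_i\tau_{X_s}f_\beta(s,x)\,dX_s^i$ coincides with the difference of the remaining terms, all of which are already known (via the estimate (\ref{for1})) to lie in $\mathcal{S}^0_T(\mathbb{R}^d)$, so the stochastic term inherits the required regularity and the commutation issue never arises. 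Your Riemann-sum argument is not wrong, but it is unnecessary once one reads the identity this way.
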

\begin{proof}
Applying the classical It\^o's formula to $f_\beta$, we have
\begin{eqnarray}\label{ito1}
\tau_{X_t}f_\beta(t, x)& = & \tau_{X_0}f_\beta(0,x) +\int_0^t \partial_t\tau_{X_s} f_{\beta}(s, x)ds - \sum_{i=1}^d\int_0^t\partial_i\tau_{X_s}f_\beta(s, x) dX_s^i \nonumber \\ & & +
 \frac{1}{2}\sum_{i,j=1}^d\int_0^t\partial_{ij}\tau_{X_s}f_\beta(s, x)~ d\langle X^i, X^j\rangle_s.
\end{eqnarray}

Taking equivalent classes in (\ref{ito1}), we obtain that
$\int_0^t\nabla\tau_{X_s}f \cdot dX_s \in \mathcal{H}^0_T(\mathbb{R}^d)$
and hence (\ref{ito})~ holds in $\mathcal{H}^0_T(\mathbb{R}^d)$.
\end{proof}

\begin{corollary}\label{ito2}
Let $f=[f_\beta] \in \mathcal{H}(\mathbb{R}^{d})$ and $X=(X^1,...,X^d)$ be a continuous
semimartingale. Then
\begin{eqnarray*}
\tau_{X_t}f & = & \tau_{X_0}f- \int_0^t\nabla\tau_{X_s}f \cdot dX_s +\frac{1}{2}\sum_{i,j=1}^d\int_0^t\partial_{ij}\tau_{X_s}fd<X^i,X^j>_s.
\end{eqnarray*}
\end{corollary}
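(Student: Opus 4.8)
The plan is to deduce the Corollary directly from the preceding Theorem by viewing a time-independent tempered generalized function as a constant-in-time element of $\mathcal{H}^1_T(\mathbb{R}^d)$. Concretely, given $f=[f_\beta]\in\mathcal{H}(\mathbb{R}^d)$, I would associate to it the family $\tilde{f}_\beta(t,x):=f_\beta(x)$, independent of $t$. The first step is to check that this assignment sends $\mathcal{H}_d^{\prime}$ into $\mathcal{H}_{T,1,d}^{\prime}$ and $\mathcal{H}_d$ into $\mathcal{H}_{T,1,d}$, so that it descends to a well-defined map on the quotients; this is exactly the subalgebra inclusion $\mathcal{H}(\mathbb{R}^d)\subset\mathcal{H}^1_T(\mathbb{R}^d)$ recorded in the earlier Proposition. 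The verification is immediate: since $\tilde{f}_\beta$ does not depend on $t$, one has $\sup_{t\in[0,T]}|\tilde{f}_\beta(t,\cdot)|_n=|f_\beta|_n$ and $\partial_t\tilde{f}_\beta\equiv 0$, so both defining sequences lie in $\mathbf{s}_d^{\prime}$ (respectively in $\mathbf{s}_d$) whenever $(f_\beta)$ does.

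The second step is to apply the Theorem to $\tilde f\in\mathcal{H}^1_T(\mathbb{R}^d)$, with $X$ the given continuous semimartingale. Because $\partial_t\tilde f_\beta=0$ for every $\beta$, the class $[\partial_t\tilde f_\beta(s,\cdot)]$ vanishes in $\mathcal{H}^0_T(\mathbb{R}^d)$, and hence the time-drift term $\int_0^t\partial_t\tau_{X_s}f(s,\cdot)\,ds$ in formula (\ref{ito}) is identically zero. The three surviving terms are precisely those appearing in the statement of the Corollary, namely the translation $\tau_{X_t}f$, the stochastic integral $\int_0^t\nabla\tau_{X_s}f\cdot dX_s$, and the quadratic-variation sum $\frac{1}{2}\sum_{i,j=1}^d\int_0^t\partial_{ij}\tau_{X_s}f\,d\langle X^i,X^j\rangle_s$. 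Thus (\ref{ito}) collapses to the claimed identity.

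The only point requiring care, and the one I would treat as the main (though minor) obstacle, is confirming that each operation in the Corollary is compatible with the embedding $\mathcal{H}(\mathbb{R}^d)\hookrightarrow\mathcal{H}^1_T(\mathbb{R}^d)$, so that the identity obtained in $\mathcal{H}^0_T(\mathbb{R}^d)$ coincides with the intended statement about $f$. This amounts to checking that $\tau_{X_t}\tilde f=\tau_{X_t}f$ (the translation acts representative-wise and does not involve the time variable), that $\partial_i\tilde f_\beta=\partial_i f_\beta$ and $\partial_{ij}\tilde f_\beta=\partial_{ij}f_\beta$, and that the Bochner--Stieltjes and stochastic integrals of the constant-in-time representatives reproduce the integrals in the statement. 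Each of these follows from the representative-wise definitions of the algebra operations together with the well-definedness established in the first step, so no estimate beyond those already used in the proof of the Theorem is needed.
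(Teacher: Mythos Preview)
Your proposal is correct and matches the paper's approach: the paper states the result as an immediate corollary of the preceding Theorem with no separate proof, relying implicitly on the subalgebra inclusion $\mathcal{H}(\mathbb{R}^d)\subset\mathcal{H}^1_T(\mathbb{R}^d)$ already recorded in the earlier Proposition, which is exactly the embedding you spell out. Your additional checks (that $\partial_t\tilde f_\beta=0$ kills the drift term and that the remaining operations are compatible with the embedding) make explicit what the paper leaves tacit.
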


\subsection{It\^o's formula for tempered distributions}
In order to prove the \"{U}stunel-It\^o's formula for tempered
distributions (see \cite{ustunel}), we need the following result
from the stochastic integration theory in nuclear spaces (see
\cite{ustunel2}). Let $T_t$ be a
$\mathcal{S}^{\prime}(\mathbb{R}^d)$-valued continuous predictable
process and $X_t$ be a continuous semimartingale, then $\int_0^tT_s
dX_s$ is the unique $\mathcal{S}^{\prime}(\mathbb{R}^d)$ valued
semimartingale such that for all $\varphi \in
\mathcal{S}(\mathbb{R}^d)$,
\[
(\int_0^tT_s dX_s)(\varphi)=\int_0^tT_s(\varphi)dX_s.
\]
\begin{lemma}\label{lemma}
Let $T \in \mathcal{S}^{\prime}(\mathbb{R}^d)$,  $X$ be a $\mathbb{R}^d$ valued continuous semimartingale and $V$ be a continuous finite variation
process. Then
\[
\int_0^t\tau_{X_s}\iota(T)dV_s \approx \iota(\int_0^t \tau_{X_s}T dV_s).
\]
\end{lemma}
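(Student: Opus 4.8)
The plan is to verify the association directly from its definition: fix an arbitrary $\varphi \in \mathcal{S}(\mathbb{R}^d)$ and show that the two Hermite truncations, paired against $\varphi$, have the same limit as $\beta \to \infty$. Writing $T_\beta = \sum_{\gamma \leq \beta} T(h_\gamma) h_\gamma$ and $S_t := \int_0^t \tau_{X_s} T \, dV_s \in \mathcal{S}^{\prime}(\mathbb{R}^d)$, unravelling the two definitions reduces the claim to showing
\[
A_\beta - B_\beta \longrightarrow 0, \qquad A_\beta := \int_{\mathbb{R}^d} \Big( \int_0^t \tau_{X_s} T_\beta(y)\, dV_s \Big)\varphi(y)\, dy, \quad B_\beta := \int_{\mathbb{R}^d} (S_t)_\beta(y)\, \varphi(y)\, dy,
\]
where $(S_t)_\beta = \sum_{\gamma \leq \beta} S_t(h_\gamma) h_\gamma$. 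I will show that both $A_\beta$ and $B_\beta$ converge to $\int_0^t (\tau_{X_s}T)(\varphi)\, dV_s$.

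The term $B_\beta$ is the easy one. By definition $B_\beta = \sum_{\gamma \leq \beta} S_t(h_\gamma)\, \widehat{\varphi}_\gamma$ with $\widehat{\varphi}_\gamma = \int h_\gamma \varphi$, so Theorem \ref{rpreS} gives directly $B_\beta \to S_t(\varphi)$. The defining property of the $\mathcal{S}^{\prime}(\mathbb{R}^d)$-valued integral recalled above then yields $S_t(\varphi) = \int_0^t (\tau_{X_s}T)(\varphi)\, dV_s$, as required; no interchange of limits is needed here.

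For $A_\beta$ I would first use that the continuous linear functional $\psi \mapsto \int \psi \varphi$ commutes with the Bochner--Stieltjes integral (a Fubini-type argument, legitimate because each $T_\beta$ is a finite combination of Hermite functions, hence Schwartz) to write $A_\beta = \int_0^t g_\beta(s)\, dV_s$, where, after the change of variables $z = y - X_s$, $g_\beta(s) = \int T_\beta(z)\, \tau_{-X_s}\varphi(z)\, dz = \sum_{\gamma \leq \beta} T(h_\gamma)\, \mathbf{h}(\tau_{-X_s}\varphi)_\gamma$. Theorem \ref{rpreS} gives the pointwise-in-$s$ limit $g_\beta(s) \to T(\tau_{-X_s}\varphi) = (\tau_{X_s}T)(\varphi)$. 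The main point, and the step I expect to be the real obstacle, is to pass this limit under the integral against $dV_s$; this requires a uniform-in-$\beta$ dominating function, supplied by a weighted Cauchy--Schwarz estimate. Choosing $n$ from Corollary \ref{coro} so that $\|T\|_{-n} < \infty$, and recalling the isometry $|\mathbf{h}(\psi)|_n = |\psi|_n$ of Theorem \ref{rpreS},
\[
|g_\beta(s)| \leq \|T\|_{-n}\, |\tau_{-X_s}\varphi|_n \leq \|T\|_{-n}\, P_n(|X_s|)\, |\varphi|_n,
\]
the last inequality being Lemma \ref{lema}. Since $s \mapsto P_n(|X_s|)$ is continuous, hence bounded on $[0,t]$, this bound is integrable against $d|V|_s$, so dominated convergence gives $A_\beta \to \int_0^t (\tau_{X_s}T)(\varphi)\, dV_s$. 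Comparing the two limits yields $A_\beta - B_\beta \to 0$, which is exactly the asserted association.
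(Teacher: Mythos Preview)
Your argument is correct and follows essentially the same route as the paper: both sides are paired with a test function $\varphi$, the left-hand side is handled by the pointwise convergence $T_\beta(\tau_{-X_s}\varphi)\to T(\tau_{-X_s}\varphi)$ together with the uniform bound $|T_\beta(\tau_{-X_s}\varphi)|\leq \|T\|_{-n}\,P_n(|X_s|)\,|\varphi|_n$ from Corollary~\ref{coro} and Lemma~\ref{lema}, and dominated convergence against $d|V|_s$ does the rest. Your treatment is in fact a bit more explicit than the paper's, spelling out the Fubini/Bochner step for $A_\beta$ and the use of Theorem~\ref{rpreS} for $B_\beta$, but the underlying idea is identical.
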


\begin{proof}
Let $\iota(T)=[T_\beta]$ and $\varphi \in \mathcal{S}(\mathbb{R}^d)$. Since $\lim_{\beta \rightarrow \infty}T_\beta=T$ we have
\[
\lim_{\beta \rightarrow \infty}\tau_{X_s(\omega)}T_\beta(\varphi)=\tau_{X_s(\omega)}T(\varphi)
\]
for all $s$ and $\omega \in \Omega$.

Applying the Corollary \ref{coro} and  Lemma \ref{lema} we have that there exists $q \in
\mathbb{N}_0$ such that
\begin{eqnarray*}
| \tau_{X_s(\omega)}T_\beta(\varphi) |& \leq &
 |T_\beta |_{-q}   |\tau_{X_s(\omega)}\varphi |_{q} \\
& \leq & P_q(|X_s(\omega)|) \ | T |_{-q}  \  |\varphi |_q.
\end{eqnarray*}
By the dominate convergence Theorem we obtain
\[
\lim_{\beta \rightarrow \infty} \int_0^t \tau_{X_s}T_\beta(\varphi) dV_s
=\int_0^t \tau_{X_s}T(\varphi) dV_s.
\]
We conclude that
\begin{eqnarray*}
\lim_{\beta \rightarrow \infty} \int_0^t \tau_{X_s(\omega)} \iota(T) dV_s(\omega)_\beta(\varphi) & = &
\lim_{\beta \rightarrow \infty} \int_0^t \tau_{X_s(\omega)}T_\beta(\varphi) dV_s(\omega) \\
& = & \int_0^t \tau_{X_s(\omega)}T(\varphi) dV_s(\omega) \\
& = & \lim_{\beta \rightarrow \infty} \iota (\int_0^t \tau_{X_s(\omega)}T  dV_s(\omega))_\beta(\varphi),
\end{eqnarray*}
and the proof is complete.

\end{proof}

\begin{proposition}[It\^o's formula for tempered distributions]
Let $T \in \mathcal{S}^{\prime}(\mathbb{R}^d)$ and $X=(X^1,...,X^d)$ be a continuous
semimartingale. Then
\[
\tau_{X_t}T=\tau_{X_0}T-\sum_{i=1}^d\int_0^t\partial_i\tau_{X_s}T dX_s^i +
\frac{1}{2}\sum_{i,j=1}^d\int_0^t\partial_{ij}\tau_{X_s}Td<X^i,X^j>_s.
\]
\end{proposition}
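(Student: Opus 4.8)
The plan is to lift the distributional identity to the generalized-function algebra, where the Itô formula has already been established, and then to descend again by means of the association relation together with the faithfulness of the embedding $\iota$ (the second part of Proposition \ref{p2}). First I would set $f=\iota(T)=[T_\beta]\in\mathcal{H}(\mathbb{R}^d)$, which is time independent, and apply Corollary \ref{ito2}. This yields, as an equality in $\mathcal{H}^0_T(\mathbb{R}^d)$,
\[
\tau_{X_t}\iota(T)=\tau_{X_0}\iota(T)-\int_0^t\nabla\tau_{X_s}\iota(T)\cdot dX_s+\frac{1}{2}\sum_{i,j=1}^d\int_0^t\partial_{ij}\tau_{X_s}\iota(T)\,d\langle X^i,X^j\rangle_s .
\]
The strategy is then to show that each term on the right is associated to $\iota$ applied to the corresponding term of the claimed formula, and to use that $\approx$ is compatible with finite sums and scalar multiples so that the associated pieces may be recombined under a single $\iota$.

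The translation and bracket terms are handled by the results already at hand. For the former, the first part of Proposition \ref{p2} gives $\tau_{X_t}\iota(T)\approx\iota(\tau_{X_t}T)$, and likewise at time $0$. For the latter, since $\iota$ commutes with $\partial^{\alpha}$ and with translation we have $\partial_{ij}\tau_{X_s}\iota(T)=\tau_{X_s}\iota(\partial_{ij}T)$; as $\langle X^i,X^j\rangle$ is a continuous finite variation process, Lemma \ref{lemma} applied to the distribution $\partial_{ij}T$ gives directly $\int_0^t\tau_{X_s}\iota(\partial_{ij}T)\,d\langle X^i,X^j\rangle_s\approx\iota(\int_0^t\tau_{X_s}\partial_{ij}T\,d\langle X^i,X^j\rangle_s)$.

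The remaining, and main, difficulty is the stochastic integral term $\int_0^t\nabla\tau_{X_s}\iota(T)\cdot dX_s=[\sum_i\int_0^t\partial_i\tau_{X_s}T_\beta\,dX_s^i]$, since Lemma \ref{lemma} covers only finite variation integrators and cannot be invoked here. I would establish $\int_0^t\nabla\tau_{X_s}\iota(T)\cdot dX_s\approx\iota(\sum_i\int_0^t\partial_i\tau_{X_s}T\,dX_s^i)$ by testing against $\varphi\in\mathcal{S}(\mathbb{R}^d)$, using a stochastic Fubini argument to interchange the spatial pairing with the stochastic integral, and thereby writing the $\beta$-level quantity as $\sum_i\int_0^t\partial_i\tau_{X_s}T_\beta(\varphi)\,dX_s^i$. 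Because $T_\beta\to T$ in $\mathcal{S}'(\mathbb{R}^d)$ and $\partial_i$ and $\tau_{X_s}$ are continuous there, the integrands converge pointwise to $\partial_i\tau_{X_s}T(\varphi)$. The domination needed to pass to the limit under the stochastic integral comes, exactly as in the proof of Lemma \ref{lemma}, from Corollary \ref{coro} and Lemma \ref{lema}: writing $\partial_i\tau_{X_s}T_\beta(\varphi)=-\tau_{X_s}T_\beta(\partial_i\varphi)$ one bounds it by $C\,P_q(|X_s|)\,|T|_{-q}\,|\varphi|_{q+1}$ uniformly in $\beta$. The dominated convergence theorem for stochastic integrals then gives convergence in probability of $\sum_i\int_0^t\partial_i\tau_{X_s}T_\beta(\varphi)\,dX_s^i$ to $\sum_i\int_0^t\partial_i\tau_{X_s}T(\varphi)\,dX_s^i$, which by the characterizing property of the $\mathcal{S}'$-valued stochastic integral equals $(\sum_i\int_0^t\partial_i\tau_{X_s}T\,dX_s^i)(\varphi)$; this is precisely the required association.

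Collecting the three contributions yields $\iota(\tau_{X_t}T)\approx\iota(S_t)$, where $S_t$ denotes the right-hand side of the proposition. Finally I would apply the second part of Proposition \ref{p2} pathwise: since a tempered distribution is determined by its values on the countable family $\{h_\gamma\}$, extracting by a diagonal argument a single subsequence along which the convergence in probability becomes almost sure for every $h_\gamma$ reduces the matter, for almost every $\omega$, to the deterministic faithfulness statement, giving $\tau_{X_t}T=S_t$ in $\mathcal{S}'(\mathbb{R}^d)$. I expect the stochastic integral step to be the only genuine obstacle, the rest being formal consequences of the homomorphism properties of $\iota$ and of Lemma \ref{lemma}.
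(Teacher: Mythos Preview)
Your proposal is correct and follows essentially the same route as the paper: apply the generalized-function It\^o formula to $\iota(T)$, use Lemma \ref{lemma} and Proposition \ref{p2} for the finite-variation and translation pieces, handle the stochastic integral by the dominated convergence theorem for stochastic integrals (with the same $P_q(|X_s|)\,|T|_{-q}$ bound), and descend via the faithfulness of $\iota$. You are in fact more explicit than the paper on two points---the stochastic Fubini step and the subsequence/diagonal argument needed to pass from convergence in probability to a pathwise application of Proposition \ref{p2}---and your passing remark that $\iota$ commutes with translation should read only as association, though the equality $\partial_{ij}\tau_{X_s}\iota(T)=\tau_{X_s}\iota(\partial_{ij}T)$ you actually use follows already from $\iota\circ\partial^\alpha=\partial^\alpha\circ\iota$ together with the commutation of translation and differentiation.
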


\begin{proof}
Applying It\^o's formula (\ref{ito})  to $\iota(T)$ and making use
of Proposition \ref{p2} and Lemma \ref{lemma} we deduce  that
\[
\iota (\tau_{X_t}T-\tau_{X_0}T-\frac{1}{2}\sum_{i,j=1}^d\int_0^t\partial_{ij}\tau_{X_s}fd<X^i,X^j>_s ) \approx \int_0^t\nabla\tau_{X_s}\iota (T) \cdot dX_s.
\]

According to an easy modification of Lemma \ref{lemma} (we make use
of the dominated convergence theorem for stochastic integrals) we
can to prove that

\begin{equation}\label{e1}
\int_0^t\nabla\tau_{X_s}\iota (T) \cdot dX_s \approx \iota(\sum_{i=1}^d \int_0^t\partial_i\tau_{X_s}T dX_s^i).
\end{equation}

Then, from the Proposition \ref{p2} and equality  (\ref{e1}) we conclude
the proof.
\end{proof}

\section{Heat equation in $\mathcal{H}_T^1(\mathbb{R}^d)$}

We introduce next the concept of expected value (or expectation) for
certain $\mathcal{H}(\mathbb{R}^d)$-valued random variables. More
precisely, let $X$ be a $\mathbb{R}^d$ valued random variable with
$\mathbb{E}(|X|^n) < \infty $ for all $n \in \mathbb{N}_0$ and   let
$f=[f_\beta] \in \mathcal{H}(\mathbb{R}^d)$. The expectation of
$\tau_X f$ is $\mathbb{E}(\tau_X f):=[\mathbb{E}(\tau_X f_\beta)]$.

We observe that $\mathbb{E}(\tau_X f)$ is  well-defined as an
element of $\mathcal{H}(\mathbb{R}^d)$. In fact, by Lemma
\ref{lema}, it follows that:
\begin{eqnarray*}
 |\mathbb{E}(\tau_X f_\beta)|_n & \leq & \mathbb{E}(P_n(|X|))
 |f_\beta|_n,
\end{eqnarray*}
for all $\beta \in \mathbb{N}_0^d$.

The remaining of the present section is concerned with the Cauchy
problem for the heat equation,

\begin{equation}\label{heat}
 \left \{
\begin{array}{lll}
u_t & = & \frac{1}{2} \triangle u \\
u_0 & = & f\in \mathcal{H}(\mathbb{R}^d).
\end{array}
\right .
\end{equation}

\begin{definition}
We say that $u \in \mathcal{H}^1_T(\mathbb{R}^d)$ is a generalized
solution of the Cauchy problem (\ref{heat}) if $u_t  =  \frac{1}{2} \triangle u$ in $\mathcal{H}^0_T(\mathbb{R}^d)$ and
$u_0  =  f$ in $\mathcal{H}(\mathbb{R}^d)$.
\end{definition}

\begin{proposition}\label{lemaheat}  For every   $f\in \mathcal{H}(\mathbb{R}^d) $ there
exist a unique solution to the Cauchy problem (\ref{heat}) in
$\mathcal{H}^1_T(\mathbb{R}^d)$.
\end{proposition}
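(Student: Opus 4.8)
The plan is to construct the solution probabilistically and to read off both existence and uniqueness from It\^o's formula together with the moderateness/negligibility estimates built into $\mathcal{H}^1_T(\mathbb{R}^d)$. Let $B=(B^1,\dots,B^d)$ be a standard $d$-dimensional Brownian motion with $B_0=0$, and take as candidate $u:=[u_\beta]$ with
\[
u_\beta(t,x):=\mathbb{E}(\tau_{B_t}f_\beta)(x)=\mathbb{E}[f_\beta(x-B_t)].
\]
For fixed $t$ each $u_\beta(t,\cdot)$ is the convolution of $f_\beta$ with a Gaussian, hence lies in $\mathcal{S}(\mathbb{R}^d)$, and it is smooth in $t$, so $(u_\beta)\in\mathcal{S}^1_T(\mathbb{R}^d)^{\mathbb{N}^d_0}$.

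To see that $u$ solves the equation I would apply Corollary \ref{ito2} to $f$ with $X=B$; since $\langle B^i,B^j\rangle_s=\delta_{ij}s$, at the level of representatives this reads
\[
\tau_{B_t}f_\beta=f_\beta-\sum_{i=1}^d\int_0^t\partial_i\tau_{B_s}f_\beta\,dB_s^i+\tfrac12\int_0^t\triangle\tau_{B_s}f_\beta\,ds.
\]
Because $f_\beta\in\mathcal{S}(\mathbb{R}^d)$ has bounded derivatives, each stochastic integral is a genuine martingale and vanishes in expectation; taking $\mathbb{E}$ and interchanging it with $\triangle$ and $\int_0^t(\cdot)\,ds$ (dominated convergence and Fubini) yields $u_\beta(t,\cdot)=f_\beta+\tfrac12\int_0^t\triangle u_\beta(s,\cdot)\,ds$. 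Hence each $u_\beta$ solves $\partial_t u_\beta=\tfrac12\triangle u_\beta$ classically with $u_\beta(0,\cdot)=f_\beta$, so the defect is exactly zero, giving $u_t=\tfrac12\triangle u$ in $\mathcal{H}^0_T(\mathbb{R}^d)$ and $u_0=f$ in $\mathcal{H}(\mathbb{R}^d)$. Membership $u\in\mathcal{H}^1_T(\mathbb{R}^d)$ follows from the expectation bound stated just before (\ref{heat}): $\sup_{t}|u_\beta(t,\cdot)|_n\le\big(\sup_{t\le T}\mathbb{E}(P_n(|B_t|))\big)|f_\beta|_n$, where the prefactor is finite since $B_t$ has all moments, so $(\sup_t|u_\beta(t,\cdot)|_n)\in\mathbf{s}_d^{\prime}$; the time derivative is controlled the same way after writing $\partial_t u_\beta=\tfrac12\triangle u_\beta$ and using continuity of $\triangle$ on $\mathcal{S}(\mathbb{R}^d)$.

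For uniqueness, let $v=[v_\beta]$ be any solution and put $r_\beta:=\partial_t v_\beta-\tfrac12\triangle v_\beta$ and $g_\beta:=v_\beta(0,\cdot)-f_\beta$; by hypothesis $(\sup_t|r_\beta(t,\cdot)|_n)\in\mathbf{s}_d$ and $(|g_\beta|_n)\in\mathbf{s}_d$ for every $n$. Writing $P_\tau\varphi:=\mathbb{E}(\tau_{B_\tau}\varphi)$ for the heat semigroup (so that $u_\beta(t,\cdot)=P_tf_\beta$), Duhamel's formula gives
\[
v_\beta(t,\cdot)-u_\beta(t,\cdot)=P_t g_\beta+\int_0^t P_{t-s}r_\beta(s,\cdot)\,ds.
\]
The same expectation bound yields $|P_\tau\varphi|_n\le C_n|\varphi|_n$ uniformly in $\tau\in[0,T]$, whence
\[
\sup_{t\le T}|v_\beta(t,\cdot)-u_\beta(t,\cdot)|_n\le C_n|g_\beta|_n+C_nT\sup_{s\le T}|r_\beta(s,\cdot)|_n.
\]
The right-hand side is a linear combination of two sequences in $\mathbf{s}_d$, and since $\mathbf{s}_d$ is a linear subspace stable under domination by nonnegative elements, the left side lies in $\mathbf{s}_d$ for every $n$. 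Using $\partial_t(v_\beta-u_\beta)=\tfrac12\triangle(v_\beta-u_\beta)+r_\beta$ together with continuity of $\triangle$ handles the time derivative in the same way. Thus $(v_\beta-u_\beta)\in\mathcal{H}_{T,1,d}$, i.e. $v=u$ in $\mathcal{H}^1_T(\mathbb{R}^d)$.

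The two delicate points on which I would spend the most care are the following. First, the passage from the exact It\^o identity to the integral equation requires justifying both the vanishing expectation of the stochastic integrals and the interchange of $\mathbb{E}$ with $\triangle$ and the time integral; this rests on the uniform boundedness of the derivatives of each Schwartz representative $f_\beta$. Second, and more importantly, the uniqueness argument hinges on Duhamel's representation and on the uniform-in-time semigroup estimate $|P_\tau\varphi|_n\le C_n|\varphi|_n$: it is precisely this estimate that converts a negligible defect $r_\beta$ and a negligible initial error $g_\beta$ into a negligible difference $v_\beta-u_\beta$, and the main obstacle is keeping all seminorm indices consistent so that the bounds close within the single family $\{|\cdot|_n\}$ rather than losing derivatives at each step.
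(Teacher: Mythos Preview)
Your proof is correct and follows essentially the same route as the paper: existence via It\^o's formula for $\tau_{B_t}f$ followed by taking expectations, and uniqueness by representing the difference of two solutions through the heat semigroup and invoking the translation estimate of Lemma~\ref{lema}. The only cosmetic difference is that you phrase the uniqueness step as Duhamel's formula for $v_\beta-u_\beta$ against the specific solution just constructed, whereas the paper takes two arbitrary solutions and calls the same representation the Feynman--Kac formula; the underlying computation and the key estimate $|P_\tau\varphi|_n\le \mathbb{E}(P_n(|B_\tau|))\,|\varphi|_n$ are identical in both.
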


\begin{proof} {\large Step 1} ( Existence) By It\^o's formula (\ref{ito}) we have

\begin{equation}\label{exit}
\tau_{B_t}f = f-\int_0^t\nabla\tau_{B_s}f \cdot dB_s +
\int_0^t \frac{1}{2} \triangle \tau_{B_s}fds.
\end{equation}

We observe that: $\mathbb{E}(\int_0^t \triangle \tau_{B_s}f
ds)=\int_0^t \mathbb{E}(\triangle \tau_{B_s}f) ds$; in fact, this is
a consequence of inequality (\ref{for1}),  that
$\mathbb{E}(|B_t|^n)< \infty$ and that $\mathbb{E}(\int_0^t|B_s|^nds)<
\infty$ for all $n \in \mathbb{N}_0$.

Taking expectation in (\ref{exit}) we obtain that
\[
\mathbb{E}(\tau_{B_t}f)=f+\int_0^t \frac{1}{2} \triangle \mathbb{E}(\tau_{B_s}f)ds.
\]
Thus $\mathbb{E}(\tau_{B_t}f)$ solves the Cauchy problem (\ref{heat}).

\noindent {\large Step 2} (Uniqueness)
    We consider the uniqueness. Suppose that $u=[u_\beta ]$ and $v=[v_\beta ]$ are two generalized
solutions of (\ref{heat}), we denote the difference $u_\beta - v_\beta$ by $a_\beta$ . By the definition, $a_\beta$
satisfies

\begin{equation}\label{heatuni}
 \left \{
\begin{array}{lll}
\frac{d}{dt}a_\beta & = & \frac{1}{2} \triangle a_\beta + h_\beta \\
a_\beta(0,\cdot) & = & g_\beta,
\end{array}
\right .
\end{equation}

\noindent with  $h_\beta \in \mathcal{H}_{T,0, d} $ and  $g_\beta \in \mathcal{H}_d$. Applying the Feynman–Kac formula (see \cite{Frei})
to $a_\beta$ we get

\begin{eqnarray}\label{uni2}
a_{\beta}(t,x) & = & \widetilde{\mathbb{E}}(g_\beta(x+\widetilde{B}_t) + \int_{0}^{t}    h_{\beta}(s,x+\widetilde{B}_s) \
 ds )
\end{eqnarray}
where $\widetilde{B}$ is a $d$-dimensional Brownian motion with $\widetilde{B}_0=0$ in an auxiliary
probability space.

It follows that $\sup_{t}\|  a_{\beta}(t,x) \|_{n} \in \mathbf{s}_d$, for each $n \in \mathbb{N}_0$. From this fact and  equation
(\ref{heatuni}) we have that $\sup_{t}\| \frac{d}{dt} a_{\beta}(t,x) \|_{n} \in \mathbf{s}_d$. We conclude that $(a_{\beta}) \in \mathcal{H}_{T,1,d}$ and thus (\ref{heat}) has an unique solution.
\end{proof}

\begin{remark}
\noindent The proof of existence and uniqueness of Proposition
\ref{lemaheat} can be  extended easily to the following Cauchy
problem,

\begin{equation}\label{heat2}
 \left \{
\begin{array}{lll}
u_t & = & \frac{1}{2} \triangle u + g,  \\
u_0 & = & f\in \mathcal{H}(\mathbb{R}^d)
\end{array}
\right .
\end{equation}

\noindent where  $ g\in \mathcal{H}_T^0(\mathbb{R}^d)$.
\end{remark}

\end{document}